\begin{document}

\newtheorem{mydef}{Definition}

\newtheorem{thm}{Theorem}
\newtheorem{lem}[thm]{Lemma}

\title{No Dense Subgraphs Appear in the Triangle-free Graph Process}

\author{Stefanie Gerke \hspace*{1cm} Tam\'as Makai\\
Royal Holloway College\\
University of London\\
Egham TW20 0EX}
\date{30.01.2010}
\maketitle

\begin{abstract}
Consider the triangle-free graph process, which starts from the empty graph on $n$ vertices and a random ordering of the possible ${n \choose 2}$ edges; the edges are added in this ordering provided the graph remains triangle free.  We will show that there exists a constant $c$ such  that no copy of any fixed finite triangle-free graph on $k$ vertices with at least $ck$ edges  asymptotically almost surely appears in the triangle-free graph process.
\end{abstract}

Keywords: triangle-free graph process, subgraphs

\section{Introduction}{\ }

The random graph process starts from the empty graph on $n$ vertices $G_{n,0}$  and in the  $i$th step,  $G_{n,i}$ is obtained from $G_{n,i-1}$ by inserting  an edge chosen uniformly at random from $\overline{G_{n,i-1}}$, the complement graph of $G_{n,i-1}$. We are interested in structural properties of $G_{n,m}$ that have probability tending to one as the number of vertices $n$ tends to infinity. We say that these properties hold asymptotically almost surely (a.a.s.). The random graph process is well understood, partly as $G_{n,m}$ has strong connections to the random graph model $G_{n,p}$ when $p\approx m/{n \choose 2}$. In $G_{n,p}$ each edge is present independently of the presence or absence of all other edges with probability $p$; see \cite{MR1864966}, \cite{MR1782847}.

We are interested in the triangle-free graph process where at  step $i$  of the random graph process an edge  is chosen uniformly at random from the set of edges which are not only in $\overline{G_{n,i-1}}$ but when added to $G_{n,i-1}$ the graph remains triangle free. The process terminates when no more edges can be inserted. In this paper we show that there exists a constant $c$ such  that a.a.s.\ no copy of any fixed finite triangle-free graph on $k$ vertices with at least $ck$ edges  appears in the triangle-free graph process. For instance large complete bipartite graphs a.a.s.\ do not appear  in the triangle-free graph process.

Wolfovitz\cite{trifree} proved recently  a complementary result namely that balanced sparse graphs appear in the triangle-free process. More precisely, he gave bounds on the number of copies of any fixed balanced triangle-free graph $F$ with  $e_F<2v_F$ that a.a.s\ hold in the triangle-free graph process, after a small percentage of the edges have been inserted.
Here, $v_F$ denotes the number of vertices in  $F$, $e_F$ denotes the number of edges and a graph $F$ is balanced if $e_H/v_H\leq e_F/v_F$ for every induced subgraph $H\subset F$. Let us note that a related process, the random planar graph process (where at each step an edge is inserted if the graph remains planar) behaves differently.  Gerke, Schlatter, Steger and Taraz\cite{MR2387559} showed that  a.a.s.\ the planar graph process contains a copy of any fixed planar graph after inserting just $(1+\varepsilon)n$ edges.

Erd\H{o}s, Suen and Winkler\cite{MR1370965} were the first to consider the triangle-free graph process. They have shown that the triangle-free graph process terminates a.a.s.\ after $O(n^{3/2}\sqrt{\log{n}})$ edges have been inserted in contrast to the more restrictive property of being bipartite, which a.a.s.\ terminates after $O(n^2)$ steps. More recently Bohman\cite{MR2522430} strengthened this result by showing a conjecture of Spencer\cite{maxtri}, namely that the final graph  contains a.a.s.\ $\Theta(n^{3/2}\sqrt{\log{n}})$ edges.  He also proved that  the maximal independent set has a.a.s.\ size $O(\sqrt{n\log{n}})$. This implies Kim's result\cite{MR1369063} on the lower bound of the Ramsey number $R(3,t)=\Omega(t^2/\log{t})$.

In his proof Bohman analyzed the steps leading to an edge being excluded from the triangle-free graph process until a small percentage of the edges have been inserted. For any pair of non-adjacent vertices $u,v$, he gave estimates on the number of ``open", ``partial" and ``complete" vertices see Figure 1. In particular, if there is a vertex which is complete with respect to $\{u,v\}$, then the edge $\{u,v\}$ cannot be inserted by the triangle-free process. In this case $\{u,v\}$ is called a closed pair.
\begin{center}
\begin{tikzpicture}[scale=2]

\path (2,1) node (X1) {w};
\path (4,1) node (X2) {w};
\path (6,1) node (X3) {w};
\path (1.5,0) node (Y1) {u};
\path (3.5,0) node (Y2) {u};
\path (5.5,0) node (Y3) {u};
\path (2.5,0) node (Z1) {v};
\path (4.5,0) node (Z2) {v};
\path (6.5,0) node (Z3) {v};

\foreach \i in {1,...,3}
{
\fill (X\i.south) circle (1pt);
\fill (Y\i.north) circle (1pt);
\fill (Z\i.north) circle (1pt);
}
\path(2,-0.5) node{$w$ open};
\path(4,-0.5) node{$w$ partial};
\path(6,-0.5) node{$w$ complete};

\draw[dotted] (X1.south)--(Y1.north);
\draw[dotted] (X1.south)--(Z1.north);
\draw[dotted] (X2.south)--(Y2.north);
\draw (X2.south)--(Z2.north);
\draw (X3.south)--(Y3.north);
\draw (X3.south)--(Z3.north);

\end{tikzpicture}\\
{\it Figure 1.} $w$ is open/partial/complete with respect to $\{u,v\}$
\end{center}

Using Bohman's estimates, we will show that in the regime when the estimates hold, a.a.s.\ no copy of a fixed dense subgraph $F$ appears and more importantly, for any possible placement of a copy of  $F$  one of its edges becomes closed. Therefore no copy of $F$ can be completed later in the process.

\section {Main Results}
Denote the graph created after $i$ edges were inserted by the triangle-free random graph process with $G_i$ and the set of its edges with $E_i$. A pair of vertices $\{u,v\}$ is called closed at step $i$ if inserting the edge $\{u,v\}$ in $G_i$ would result in a triangle. The set of all closed pairs at step $i$ is denoted by $C_i$. A pair is open if it is neither an edge of the graph nor  closed. The set of open pairs at step $i$ is denoted by $O_i$ and $Q(i)=|O_i|$. Note that both vertices and pairs of vertices can be open.
If $\{u,v\}\not\in E_i$ then $Y_{u,v}(i)$ is the set of partial vertices, that is, the set of vertices $w$ such that exactly one of the pairs $\{u,w\}$ or $\{v,w\}$ is open and the other is an edge at step $i$: $$Y_{u,v}(i)=\{w \in V:|\{\{u,w\},\{v,w\}\}\cap O_i|=|\{\{u,w\},\{v,w\}\}\cap E_i|=1\}.$$ If $\{u,v\}\in E_i$ then $Y_{u,v}(i)=Y_{u,v}(i-1)$.

Define $t(i)=i/n^{3/2}$. Bohman\cite{MR2522430} showed bounds on $|O_i|$ and $|Y_{u,v}(i)|$ for $i\leq\mu n^{3/2}\sqrt{\log n}$ with $\mu=1/32$.  (Bohman made no effort to optimize the value of $\mu$.) For the remainder of the paper we set $\mu=1/32$ and $m= \mu n^{3/2}\sqrt{\log n}$.

\begin{mydef}
Let $H$ be the event that the following bounds hold for all pairs $\{u,v\}\not\in E_i$ and for all $i\leq m=\mu n^{3/2}\sqrt{\log n}$ with $\mu=1/32$:
\begin{align*} |Q(i)-n^2q(t(i))|&\leq n^2 g_q(t(i))\\
\Big{|}|Y_{u,v}(i)|-\sqrt{n} y(t(i))\Big{|}&\leq \sqrt{n} g_y(t(i))
\end{align*}
where
\begin{align*} q(t)&=\exp(-4t^2)/2\\
y(t)& =4t\exp(-4t^2)\\
   g_q(t) &= \left\{
     \begin{array}{lr}
       \exp(41t^2+40t)n^{-1/6} & : t \leq 1\\
       \frac{\exp(41t^2+40t)}{t}n^{-1/6} & : t>1
     \end{array}
   \right.\\
g_y(t)&=\exp(41t^2+40t)n^{-1/6}.
\end{align*}
\end{mydef}
\begin{thm}{\cite{MR2522430}}
The event $H$  holds a.a.s..
\end{thm}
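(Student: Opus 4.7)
The natural approach is Wormald's differential equation method: track the two families of random variables $Q(i)$ and $|Y_{u,v}(i)|$, compute their expected one-step changes conditional on the history, and show that these match the derivatives of the deterministic trajectories $n^2 q(t)$ and $\sqrt{n}\,y(t)$. Recall $t=i/n^{3/2}$, so that $dt=1/n^{3/2}$ per step; in particular $\frac{d}{di}(n^2 q(t))=\sqrt{n}\,q'(t)=-\sqrt{n}\,y(t)$, which is the target trend for $Q$, and analogously the target trend for $|Y_{u,v}|$ is $y'(t)/n$.

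First I would compute the drift of $Q$. If the edge inserted at step $i$ is $\{x,y\}$, chosen uniformly at random from $O_{i-1}$, then $\{x,y\}$ itself leaves $O$ and, for each partial vertex $w\in Y_{x,y}(i-1)$, exactly one previously-open pair (one of $\{x,w\}$ or $\{y,w\}$) becomes closed. Hence
\[
\mathbf{E}\bigl[Q(i)-Q(i-1)\mid G_{i-1}\bigr]=-1-\frac{1}{Q(i-1)}\sum_{\{x,y\}\in O_{i-1}}|Y_{x,y}(i-1)|.
\]
Using the inductive estimate $|Y_{x,y}|\approx\sqrt{n}\,y(t)$, this drift becomes $-\sqrt{n}\,y(t)+o(\sqrt{n})$, matching the required derivative. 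The computation for $|Y_{u,v}(i)|$ is similar but more delicate: I would enumerate the ways the new edge $\{x,y\}$ can change $Y_{u,v}$ (e.g.\ $\{x,y\}$ incident to $u$, so some $w$ with $\{v,w\}$ an edge becomes partial; or $\{x,y\}$ closes some pair $\{v,w\}$, so $w$ leaves $Y_{u,v}$) and express each transition probability via $Q$ together with auxiliary counts of open/partial configurations around $\{u,v\}$, again matching the drift to $y'(t)/n$.

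Concentration then follows from a supermartingale argument. Define the error variables
\[
Q^{\pm}(i)=\pm\bigl(Q(i)-n^2 q(t(i))\bigr)-n^2 g_q(t(i)),\qquad Y^{\pm}_{u,v}(i)=\pm\bigl(|Y_{u,v}(i)|-\sqrt{n}\,y(t(i))\bigr)-\sqrt{n}\,g_y(t(i)),
\]
each stopped the first time any of the required bounds is violated. The exponential factors $\exp(41t^2+40t)$ in $g_q,g_y$ are tuned so that, conditional on the stopping time not yet being reached, the drift of each of these error variables is non-positive, making them supermartingales. The one-step increments are bounded — changes in $Q$ are $O(\sqrt{n\log n})$ (the $1+|Y_{x,y}|$ estimate) and in $|Y_{u,v}|$ are $O(1)$, since a single new edge affects at most a constant number of pairs relevant to any fixed $\{u,v\}$ — so Azuma--Hoeffding shows each stopped error variable remains non-positive with probability $1-n^{-\omega(1)}$. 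A union bound over all $i\le m$ and all $\binom{n}{2}$ pairs $\{u,v\}$ then yields $H$ a.a.s.

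The main obstacle is the simultaneous bootstrap: the drift estimate for $Q$ uses bounds on $|Y|$, while the drift for $|Y|$ uses bounds on $Q$ together with further second-order quantities (counts of ``open triples'' or ``open--edge paths'' at $\{u,v\}$), so all bounds must be proved in parallel rather than sequentially. The technical heart is selecting $g_q$ and $g_y$ so that the Azuma error and the propagated uncertainty from the other family of variables are both absorbed by their growth; the exponent $41t^2+40t$ realises precisely the balance that works, and Bohman's argument carries out this bookkeeping carefully while making no effort to optimise the constants.
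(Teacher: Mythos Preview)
The paper does not prove this theorem at all: it is quoted as a result from Bohman~\cite{MR2522430}, with no argument given here. So there is nothing in the present paper to compare your proposal against.

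That said, your sketch is an accurate high-level summary of what Bohman actually does: compute the one-step drifts of $Q$ and the $|Y_{u,v}|$, match them to the ODE trajectories $q,y$, choose error envelopes $g_q,g_y$ whose growth absorbs both the martingale deviations and the propagated uncertainty, and close the loop with a stopped-supermartingale plus Azuma--Hoeffding argument and a union bound over pairs and times. You correctly identify the coupled/bootstrap nature of the estimates. One point to be more careful about: Bohman does not track only $Q$ and $Y_{u,v}$; he also tracks the number $X_{u,v}$ of open vertices with respect to $\{u,v\}$ (both pairs open), since the drift of $Y_{u,v}$ depends on it, and in fact a small hierarchy of such auxiliary variables is needed before the system closes. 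Your sketch alludes to this (``further second-order quantities'') but a real proof must spell out exactly which families are tracked and verify the supermartingale inequality for each. As written, your proposal is a plausible outline rather than a proof, and the paper itself simply defers the full argument to \cite{MR2522430}.
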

Note that $t(m)=\mu \sqrt{\log n}$.
Let $W \subset V$ and define $e_i(W)$ as the number of edges spanned by $W$ after step $i$.

\begin{lem} \label{edges} Fix $k$. Let $S_k$ be the event that there exists $W\subset V$ with $|W|=k$
and $e_m(W)\geq 3k$. Then $P(S_k|H)=o(1)$.
\end{lem}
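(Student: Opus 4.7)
The plan is a two-level union bound: first over the $\binom{n}{k}$ choices of $W\subseteq V$, and then, for each $W$, over the $\binom{\binom{k}{2}}{3k}$ candidate $3k$-element subsets $E'\subseteq\binom{W}{2}$. Since $\{e_m(W)\geq 3k\}$ is the union of the events $\{E'\subseteq E_m\}$ over such $E'$, it suffices to bound $P(\{E'\subseteq E_m\}\cap H)$ for each fixed pair $(W,E')$ and sum.

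For a fixed $E'=\{e_1,\dots,e_{3k}\}$, partition the event $\{E'\subseteq E_m\}$ according to the permutation $\sigma\in S_{3k}$ giving the order of insertion and the insertion times $i_1<\dots<i_{3k}\leq m$. By the definition of the triangle-free process, the conditional probability given the history up to $i_j-1$ that the step-$i_j$ edge equals a specified open pair is exactly $1/|O_{i_j-1}|$, so iterating and restricting to $H$ yields
\begin{align*}
P(\{E'\subseteq E_m\}\cap H)\;\leq\;\sum_{\sigma\in S_{3k}}\sum_{i_1<\dots<i_{3k}\leq m}\prod_{j=1}^{3k}\frac{2}{n^2 q(t(i_j-1))}\;\leq\; S^{3k},
\end{align*}
where $S=\sum_{i=0}^{m-1}2/\bigl(n^2 q(t(i))\bigr)$. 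Here the $(3k)!$ permutations cancel against the inequality $\sum_{i_1<\dots<i_{3k}}\prod_j a_{i_j}\leq\bigl(\sum_i a_i\bigr)^{3k}/(3k)!$, and the bound $|O_i|\geq \tfrac{1}{2}n^2 q(t(i))$ supplied by $H$ is valid for $i\leq m$ and $n$ large because a routine computation shows $g_q(t)=o(q(t))$ uniformly on $[0,\mu\sqrt{\log n}]$.

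To finish, estimate $S$. Approximating the sum by an integral via $t=i/n^{3/2}$ and applying the standard asymptotic $\int_0^a e^{4t^2}\,dt\sim e^{4a^2}/(8a)$ with $a=\mu\sqrt{\log n}$ gives $S=O\bigl(n^{-1/2+1/256}/\sqrt{\log n}\bigr)$; in particular $nS^3=O(n^{-1/2+3/256})=o(1)$. The two-level union bound therefore produces
\begin{align*}
P(S_k\cap H)\;\leq\;\binom{n}{k}\binom{\binom{k}{2}}{3k}S^{3k}\;=\;O\!\left(k^{2k}(nS^3)^k\right)\;=\;o(1)
\end{align*}
for any fixed $k$, whence $P(S_k\mid H)\leq P(S_k\cap H)/P(H)=o(1)$ by Theorem 1.

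The main technical point I anticipate is the book-keeping for $S$ together with the clean handling of the conditioning on $H$, which is best done by bounding $P(\{E'\subseteq E_m\}\cap H)$ directly so that the $H$-lower bound on $|O_i|$ can be inserted at each step of the iteration rather than trying to condition globally. The integer $3$ in the statement is essentially forced: with only $2k$ edges the final exponent becomes positive ($n^{1/128}$ blows up), whereas $3k$ makes the exponent $3/256-1/2$ in $nS^3$ comfortably negative.
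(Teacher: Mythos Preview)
Your argument is correct and follows the same union-bound strategy as the paper: bound the chance that a fixed $k$-set acquires $3k$ internal edges by summing over insertion times and using the $H$-lower bound on $Q(i)$, then union over $\binom{n}{k}$ sets. The paper's version is slightly leaner in that it skips the inner union over $E'$ (bounding instead the event $A_i$ that \emph{some} $W$-internal edge is added at step $i$ by $k^2/Q(i)$) and, rather than estimating $S$ by an integral, uses the monotonicity $q(t(i))\geq q(t(m))=\tfrac12 n^{-4\mu^2}$ to get the uniform bound $P(A_i\mid H)\leq 4k^2 n^{-2+4\mu^2}$ and hence $P(e_m(W)\geq 3k\mid H)\leq\binom{m}{3k}(4k^2 n^{-2+4\mu^2})^{3k}$ directly.
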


\begin{proof}
 Fix $k$ vertices in $G$ and denote this set by $W$. Let $A_i$ be the event that an edge is added between two vertices in $W$ at step $i$.
Then $$P(A_i|H)\leq \frac{k^2}{Q(i)}\leq \frac {k^2}{n^2 (q(t(i))-g_q(t(i)))}\leq \frac {2k^2}{n^2 q(t(i))}\leq \frac {2k^2}{n^2 q(t(m))}$$
$$\leq\frac{4k^2}{n^2\exp(-4\mu^2 \log n)}=\frac{4k^2}{n^{2-4\mu^2}}.$$
Therefore
$$P(e_m(W)\geq 3k|H)\leq {m \choose 3k}\left(\frac{4k^2}{n^{2-4\mu^2}}\right)^{3k} \leq \left(\frac{e m 4k^2}{3kn^{2-4\mu^2}}  \right)^{3k}$$
$$\leq \left( \frac{e\mu n^{3/2}\sqrt{\log{n}}4k}{3n^{2-4\mu^2}}\right)^{3k}=o\left(\frac{1}{n^{3k/2-20k\mu^2 }}\right).$$
Since there are ${n \choose k}$ ways to select $k$ vertices, it follows from the union bound that
$$P(S_k|H)\leq {n \choose k}o\left(\frac{1}{n^{3k/2-20k\mu^2}}\right)=o\left(\frac{n^k}{n^{3k/2-20k\mu^2 }}\right)=o(1)$$
as $\mu^2$ is sufficiently small.
\end{proof}

Given a fixed graph $F$, we say that there exists a copy of $F$ in $G$ if a function $f: V(F)\rightarrow V(G)$ exists such that $\{f(u),f(v)\}\in E(G)$ for all $\{u,v\}\in E(F)$. We have just shown that no copy of a dense graph appears in the process while the first $m$ edges are taken. We will now show that when $m$ edges have been taken at least one edge of any copy of $F$ is closed.
\begin{thm}
 Let $T$ be the event that there exists a copy of a graph $F$ with $e$ edges and $k$ vertices satisfying $10k/\mu^2\leq e$ in the triangle-free graph process. Then $P(T|[\overline{S_k},\overline{S_{2k}},H])=o(1)$.
\end{thm}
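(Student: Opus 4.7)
The strategy is to union-bound over the at most $n^k$ injective embeddings $f : V(F) \to V$. For each $f$ with image $W = f(V(F))$, we will show that conditional on $H, \overline{S_k}, \overline{S_{2k}}$, with probability $1 - o(n^{-k})$ some pair $\{f(u), f(v)\}$ with $\{u,v\} \in E(F)$ becomes closed by step $m$. Since closedness is irreversible, this rules out any copy of $F$ via $f$ ever being completed. Note that $\overline{S_k}$ already forbids a copy of $F$ from lying inside $G_m$ (because $e \geq 10k/\mu^2 > 3k$), so we only need to control surviving embeddings.

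\textbf{Per-step closure probability.} Label the pairs of $f(F)$ by $p_1, \ldots, p_e$ and let $A_j(i) \subseteq O_{i-1}$ be the set of edges whose addition at step $i$ closes $p_j$; if $p_j$ is still open at step $i-1$ then $|A_j(i)| = |Y_{p_j}(i-1)|$. Conditional on $f$ surviving through step $i-1$, the probability that no pair is closed at step $i$ is $1 - |\bigcup_j A_j(i)|/Q(i-1)$. Using Bohman's bounds and $y(t)/q(t) = 8t$, together with the fact that $\overline{S_k}$ forces at least $e - 3k \sim e$ of the $p_j$ to remain non-edges (and so contribute fully to the sum), one obtains the integrated estimate
\[
\sum_{i=1}^{m} \frac{\sum_{j=1}^{e} |A_j(i)|}{Q(i-1)} \;\geq\; (1 - o(1))\cdot 4 e \mu^2 \log n.
\]

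\textbf{Overlap control via $\overline{S_{2k}}$.} To convert $\sum_j |A_j(i)|$ into a lower bound on $|\bigcup_j A_j(i)|$, note that every edge of $\bigcup_j A_j(i)$ has an endpoint in $W$, and the multiplicity of an edge $\{u,w\}$ with $u \in W$ in the multiset $\bigsqcup_j A_j(i)$ equals $|N_F(u) \cap N_{G_{i-1}}(w)| \leq |N_{G_{i-1}}(w) \cap W| =: d_W(w, i-1)$ (with a symmetric contribution when $w \in W$ as well). Combining $\overline{S_{2k}}$ with the monotonicity $e_{i-1}(\cdot) \leq e_m(\cdot)$ yields $\sum_{w \in U} d_W(w, i-1) < 6k$ for every $k$-subset $U \subseteq V \setminus W$; in particular, fewer than $k$ vertices of $V \setminus W$ have $d_W(w, i-1) \geq 6$. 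The exceptional vertices together with the edges lying inside $W$ contribute at most $O(k^3)$ to $\sum_j |A_j(i)|$, while all remaining edges carry multiplicity at most $5$. Hence
\[
\sum_{j=1}^{e} |A_j(i)| \;\leq\; 5 \Big|\bigcup_{j=1}^{e} A_j(i)\Big| + O(k^3),
\]
and since the left-hand side dominates $k^3$ for all but a negligible initial range of $i$, this gives $|\bigcup_j A_j(i)| \geq \tfrac{1}{5}(1-o(1)) \sum_j |A_j(i)|$.

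\textbf{Conclusion and main difficulty.} Multiplying the per-step survival probabilities via $\prod(1-x) \leq \exp(-\sum x)$ yields
\[
P(f \text{ survives} \mid H, \overline{S_k}, \overline{S_{2k}}) \;\leq\; n^{-\tfrac{4}{5} e \mu^2 + o(1)} \;\leq\; n^{-8k + o(1)},
\]
using $e \geq 10k/\mu^2$. A union bound over the $\leq n^k$ embeddings then gives $P(T \mid H, \overline{S_k}, \overline{S_{2k}}) \leq n^{-7k + o(1)} = o(1)$. The main technical obstacle is the overlap bound: without $\overline{S_{2k}}$, the closing sets $A_j, A_{j'}$ for two $F$-edges sharing a vertex of $W$ could overlap heavily (any common partial vertex contributes a shared closing edge), inflating $\sum_j |A_j|/|\bigcup_j A_j|$ unpredictably. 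The hypothesis $\overline{S_{2k}}$ is precisely what caps this ratio by an absolute constant, by forcing $W$-degrees of vertices in $V \setminus W$ to be $O(1)$ apart from at most $k-1$ exceptions.
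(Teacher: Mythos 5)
Your proposal is correct and follows essentially the same route as the paper: a per-step closure probability bounded below by $\sum_j |Y_{p_j}(i)|/Q(i)$ via Bohman's estimates, with $\overline{S_k}$ guaranteeing almost all of the $e$ pairs remain available and $\overline{S_{2k}}$ capping the multiplicity with which a single inserted edge can close pairs of $E_F$ (the paper's set $D_i$ of high-$W$-degree vertices is exactly your exceptional set), followed by multiplying survival probabilities and a union bound over placements. Your bookkeeping is slightly tighter (multiplicity cap $5$ versus the paper's $6$, yielding $n^{-8k+o(1)}$ rather than $n^{-4k/3}$ per placement), but both comfortably beat the $n^k$ union bound.
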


\begin{proof}
Fix a set of vertices $W$ with $|W|=k$, and a set of pairs of vertices $E_F\subset W\times W$ such that if the pairs in $E_F$ were inserted as edges they would form a copy of $F$ on $W$. Let $C_{F}(i)$ be the event that at least one pair in $E_F$ is closed after step $i$ and $O_{F}(i)$ be the event that none is closed after step $i$. For the following assume we are in the event $O_{F}(i)$.

Note that a pair $\{u,v\}$ is closed at step $i$ if and only if there is a partial vertex $w\in Y_{u,v}(i)$ and the missing edge is chosen. Thus the probability of closing a pair $s\in O_i$ is $|Y_s(i)|/Q(i)$.
The problem is that an edge can close several pairs of vertices. The subset of $W\times W$ closed by $\{w_j,v\}\in O_i$, with $v\not\in W$  is $w_j\times (N_i(v)\cap W)$ (see Figure 2), where $N_i(v)$ denotes the neighbourhood of $v$ in $G_i$.

\begin{center}
\begin{tikzpicture}[scale=2]

\path (3,1) node (X1) {$w_1$};
\fill (X1.east) circle (1pt);

\foreach \i in {2,...,4}
{
\path (\i,0) node (X\i) {$w_\i$};
\fill (X\i.east) circle (1pt);
}

\path (3.5,-1) node (W) {v};
\fill (W.east) circle (1pt);

\draw (W.east)--(X2.east);
\draw (W.east)--(X3.east);
\draw (W.east)--(X4.east);
\draw (W.east)--(X1.east);
\draw[dashed] (X1.east)--(X2.east);
\draw[dashed] (X1.east)--(X3.east);
\draw[dashed] (X1.east)--(X4.east);

\end{tikzpicture}\\
{\it Figure 2.}\ The edge $\{v,w_1\}$ closes $\{w_1,w_2\}$,$\{w_1,w_3\}$ $\{w_1,w_4\}$
\end{center}

Let $D_i$ be the set of vertices not in $W$ that have more than 6 neighbours in $W$ at time $i$. Excluding the pairs with both vertices in $W$ and the pairs with a vertex in $D_i$ the remaining pairs close at most 6 pairs in $W\times W$ and in particular at most 6 pairs in $E_F$. Therefore $\sum_{f\in E_F(W)\backslash E_i}|Y_f(i)\backslash(D_i\cup W)|$ counts any pair that  closes a pair in $E_F$ at most 6 times.

Since we are in $\overline{S_{2k}}$,  the set $D_i$ can have size at most $k$ otherwise $W\cup D_i$ would span more then $6k$ edges. Hence

\begin{align*}P(C_{F}(i+1)|[O_{F}(i),\overline{S_k},\overline{S_{2k}},H])&\geq \frac{\sum_{f\in E_F(W)\backslash E_i}|Y_f(i)\backslash(D_i\cup W)|}{6\,Q(i)}\\
&\geq\frac{\sum_{f\in (E_F)\backslash E_i}(|Y_f(i)|-2k)}{6\,Q(i)}.
\end{align*}
Since we are in the event $\overline{S_k}$ there are at most $3k$ edges in $E_F$ also $|E_F|\geq 10k/\mu^2 $ so the sum is over at least $(10/\mu^2-3)k\geq 9k/\mu^2$ open pairs. Thus
$$P(C_{F}(i+1)|[O_{F}(i),\overline{S_k},\overline{S_{2k}},H])\geq \frac{9k}{\mu^2}\frac{\sqrt{n}(y(t(i))-g_y(t(i)))-2k}{6n^2(q(t(i))+g_q(t(i)))}.$$
If $n$ is large enough then $q(t(i))+g_q(t(i))\leq2q(t(i))$, and for $m\geq i\geq n^{4/3}$ we have 
\[ y(t(i))-g_y(t(i))\geq \frac{y(t(i))}{2}\geq 2t(n^{4/3})\exp(-4t^2(m))= 2n^{-\frac16-4\mu^2},\]
therefore since $k$ is a constant:
\[\frac{\sqrt{n}y(t(i))}{2}-2k \geq \frac{7}{15}\sqrt{n}y(t(i)),\]
 and  so :
\begin{align*}
P(C_{F}(i+1)|[O_{F}(i),\overline{S_k},\overline{S_{2k}},H])&\geq \frac{9k}{\mu^2} \frac{7\sqrt{n}y(t(i))/15}{6n^2(q(t(i))+g_q(t(i)))}\\
&\geq \frac{7k}{\mu^2} \frac{\sqrt{n}y(t(i))}{20 n^2 q(t(i))}=\frac{7k}{\mu^2}\frac{4t(i)\exp(-4t^2(i))}{10n^{3/2} \exp(-4t^2(i))}\\
&=\frac{14ki}{5\mu^2 n^3}.
\end{align*}
It follows that for $m\geq i\geq n^{4/3}$ and sufficiently large $n$,
$$P(O_{F}(i+1)|[O_{F}(i),\overline{S_k},\overline{S_{2k}},H])\leq 1-\frac{14ki}{5\mu^2n^3}\leq \exp\left(-\frac{14ki}{5\mu^2 n^3}\right).$$
Thus for sufficiently large $n$
\begin{align*}
P(O_{F}(m)|[\overline{S_k},\overline{S_{2k}},H])
&=\prod_{i=0}^{m-1}P(O_{F}(i+1)|[O_{F}(i),\overline{S_k},\overline{S_{2k}},H])\\
&\leq \prod_{i=\lceil n^{4/3}\rceil}^{m-1}\exp\left(-\frac{14ki}{5\mu^2n^3}\right)
= \exp\left(\sum_{i=\lceil n^{4/3}\rceil}^{m-1}-\frac{14ki}{5\mu^2n^3}\right)\\
&= \exp\left(-\frac{14k}{5\mu^2n^3}\left(\frac{m(m-1)}{2}-\frac{\lceil n^{4/3}\rceil(\lceil n^{4/3}\rceil-1)}{2}\right)\right)\\
&\leq \exp\left(-\frac{4k}{3}\frac{ m^2}{\mu^2n^3}\right)=\exp\left(-\frac{4k}{3}\log n\right)=n^{-4k/3}.
\end{align*}
Applying the union bound gives
$$P(T|[\overline{S_k},\overline{S_{2k}},H])\leq {n \choose k}k!\,n^{-4k/3}\leq n^{k-4k/3}=o(1).$$
\end{proof}

\bibliographystyle{plain}
\bibliography{ref_no_bip01}

\begin{thebibliography}{1}

\bibitem{MR2522430}
Tom Bohman.
\newblock The triangle-free process.
\newblock {\em Adv. Math.}, 221(5):1653--1677, 2009.

\bibitem{MR1864966}
B{\'e}la Bollob{\'a}s.
\newblock {\em Random graphs}, volume~73 of {\em Cambridge Studies in Advanced
  Mathematics}.
\newblock Cambridge University Press, Cambridge, second edition, 2001.

\bibitem{MR1370965}
Paul Erd{\H{o}}s, Stephen Suen, and Peter Winkler.
\newblock On the size of a random maximal graph.
\newblock In {\em Proceedings of the {S}ixth {I}nternational {S}eminar on
  {R}andom {G}raphs and {P}robabilistic {M}ethods in {C}ombinatorics and
  {C}omputer {S}cience, ``{R}andom {G}raphs '93'' ({P}ozna\'n, 1993)},
  volume~6, pages 309--318, 1995.

\bibitem{MR2387559}
Stefanie Gerke, Dirk Schlatter, Angelika Steger, and Anusch Taraz.
\newblock The random planar graph process.
\newblock {\em Random Structures Algorithms}, 32(2):236--261, 2008.

\bibitem{MR1782847}
Svante Janson, Tomasz {\L}uczak, and Andrzej Rucinski.
\newblock {\em Random graphs}.
\newblock Wiley-Interscience Series in Discrete Mathematics and Optimization.
  Wiley-Interscience, New York, 2000.

\bibitem{MR1369063}
Jeong~Han Kim.
\newblock The {R}amsey number {$R(3,t)$} has order of magnitude {$t^2/\log t$}.
\newblock {\em Random Structures Algorithms}, 7(3):173--207, 1995.

\bibitem{maxtri}
Joel Spencer.
\newblock Maximal triangle-free graphs and {R}amsey {$R(3,t)$}.
\newblock {\em Unpublished manuscipt}, 1995.

\bibitem{trifree}
Guy Wolfovitz.
\newblock Triangle-free subgraphs at the triangle-free process.
\newblock {\em preprint}.

\end{thebibliography}

\end{document}